\documentclass{amsart}

\usepackage{graphicx}
\usepackage{enumerate}
\usepackage{amsmath,amssymb}
\usepackage{mathrsfs}

\newtheorem{maintheorem}{Theorem}

\newtheorem{theorem}{Theorem}[section]
\newtheorem{lemma}[theorem]{Lemma}

\newtheorem{corollary}[theorem]{Corollary}

\theoremstyle{definition}
\newtheorem{definition}[theorem]{Definition}
\newtheorem{remark}[theorem]{Remark}

\newcommand{\Diff}{\operatorname{Diff}}

\newcommand{\dist}{\operatorname{dist}}

\begin{document}

\title[Pluripotency and Anosov Dynamics
]
{
Pluripotency and Anosov Dynamics
}

\author{Shin Kiriki}
\address[Shin Kiriki]
{Department of Mathematics, Tokai University, 4-1-1 Kitakaname, Hiratsuka, Kanagawa, 259-1292, Japan}
\email{kiriki@tokai.ac.jp}

\author{Xiaolong Li}
\address[Xiaolong Li]
{School of Mathematics and Statistics,
Huazhong University of Science and Technology, Luoyu Road 1037, Wuhan, 430074, China}
\email{lixl@hust.edu.cn}
\thanks{X.L. was supported by NSFC 11701199 and 12331005.}

\author{Kodai Yamamoto}
\address[Kodai Yamamoto]
{Joint graduate school of mathematics for innovation, Kyushu University, 744 Motooka, Nishi-ku, Fukuoka, 819-0395, Japan}
\email{yamamoto.kodai.508@s.kyushu-u.ac.jp}
\thanks{K.Y. was supported by WISE program (MEXT) at Kyushu University.}

\subjclass[2020]{
Primary: 37D20; Secondary: 37C40.
}
\keywords{Anosov diffeomorphism; SRB measure; empirical measure; pluripotency; strong pluripotency}

\date{\today}

\begin{abstract}

We study two notions of pluripotency for dynamical systems. Strong pluripotency refers to the possibility of reprogramming, by arbitrarily small perturbations, the forward orbits of points in a set of positive Riemannian volume so that they track a prescribed orbit in a uniform time-averaged sense. In contrast, pluripotency, also called ``statistical pluripotency" in this paper, requires only asymptotic agreement of empirical measures. Strong pluripotency implies statistical pluripotency, but the converse need not hold.

For $C^r$ transitive Anosov diffeomorphisms, $r\ge2$, we prove that strong pluripotency fails for every nonempty reference set.  On the other hand, statistical pluripotency holds for a nonempty reference set if and only if it is contained in the basin of the unique SRB measure. This basin is invariant and has full Riemannian volume, but contains no periodic points. These results reveal a sharp distinction between the two notions even within uniformly hyperbolic dynamics.
\end{abstract}

\maketitle

\section{Introduction}

The notions of pluripotency and strong pluripotency share a common feature:
under arbitrarily small perturbations, the empirical measures of all points in a set of positive Riemannian
volume asymptotically agree with those of a prescribed reference orbit.
Let $M$ be a compact Riemannian manifold without boundary,
let $f:M\to M$ be a map, and let $x\in M$.
For $n\in\mathbb N$, the \emph{empirical measure} of $x$ is defined by
\[
\delta^n_{x,f}
=
\frac{1}{n}\sum_{i=0}^{n-1}\delta_{f^i(x)},
\]
where $\delta_z$ denotes the Dirac measure supported at $z\in M$.

The prescribed asymptotic dynamics includes not only
the case in which these empirical measures converge
to an invariant measure, but also historic behavior,
as considered by Ruelle~\cite{R01} and Takens~\cite{T08},
in which they fail to converge.

The term pluripotency is borrowed from stem-cell biology.
More precisely, it is inspired by Yamanaka's Nobel Prize-winning work
on induced pluripotent stem cells: by suitably reprogramming
differentiated cells, one can endow them with the ability to develop
into many different cell types.

Before giving a dynamical formulation of pluripotency, let us briefly
recall its origins. Independently of the questions raised by Ruelle and
Takens, Colli and Vargas \cite{CV01} presented a two-dimensional model
capable of stochastically realizing several types of asymptotic behavior,
such as historic behavior and nontrivial Dirac physical measures,
according to prescribed binary codes. Their model was built on Newhouse's
two-dimensional example of a thick affine horseshoe with persistent
tangencies \cite{N70}, using infinitely many perturbations of homoclinic
tangencies associated with the horseshoe. Although their work did not
address whether these properties were persistent, the basic mechanism of
pluripotency was already present in this pioneering construction.
Later, Kiriki et al.~\cite{KLNSV25} showed that an analogous pluripotent
mechanism persists in a $C^r$-neighborhood, $r\geq2$, of the
Colli-Vargas model.

The mechanism used there is based on a basic set whose dynamics is coded
by two symbols, namely a horseshoe, together with a homoclinic tangency.
Inside the gaps associated with the homoclinic tangency, one constructs
wandering domains of positive Lebesgue measure whose orbits reprogram
arbitrary dynamics of the horseshoe.
Horseshoes also play a fundamental role in the
approximation of hyperbolic invariant measures,
as illustrated by Katok-type constructions;
see, for instance, Gelfert~\cite{G16}.

This naturally raises the question of whether pluripotency can go
beyond horseshoes and reprogram, on sets of positive Lebesgue measure,
the dynamics of more general Axiom~A systems. As a first global setting
for this question, in this paper we focus on Anosov dynamics.

We now give a precise formulation of pluripotency. We use the
\emph{first Wasserstein metric}, defined for Borel probability measures
$\mu$ and $\nu$ on $M$ by
\[
d_W(\mu,\nu)
=
\sup_{\varphi\in\mathcal L}
\left|
\int_M\varphi\,d\mu-\int_M\varphi\,d\nu
\right|,
\]
where $\mathcal L$ denotes the set of Lipschitz functions
$\varphi:M\to\mathbb R$ with Lipschitz constant at most $1$.

Let $r\ge1$, let $\Lambda$ be a locally maximal uniformly hyperbolic
compact transitive invariant set for $f\in\Diff^r(M)$, and let
$\Lambda'\subset\Lambda$ be an arbitrary nonempty subset. For every $g$ sufficiently
$C^1$-close to $f$, let $\Lambda_g$ denote the hyperbolic continuation
of $\Lambda$, and let
$
H_g:\Lambda\to\Lambda_g
$
be the conjugacy satisfying
$
H_g\circ f=g\circ H_g$.
For $x\in\Lambda$, we write
\[
x_g:=H_g(x)
\]
for its continuation.

We consider the following two notions of pluripotency.

\begin{definition}[Pluripotency]\label{def:Cr-pluripotency}
We say that $f$ is \emph{$C^r$-pluripotent for $\Lambda'$} if, for every
$x\in\Lambda'$ and every $C^r$-neighborhood $\mathcal U$ of $f$, there
exist $g\in\mathcal U$ and a measurable set $D_g\subset M$ of positive
Riemannian volume such that
\[
\lim_{n\to\infty}
d_W(\delta_{y,g}^n,\delta_{x_g,g}^n)=0
\qquad
\text{for every }y\in D_g.
\]
\end{definition}
Since this notion is formulated in terms of the asymptotic
agreement of empirical measures, we also refer to it as
\emph{statistical pluripotency} in this paper.

We next introduce a stronger notion of pluripotency.

\begin{definition}[Strong pluripotency]\label{def:Cr-strong-pluripotency}
We say that $f$ is \emph{$C^r$-strongly pluripotent for $\Lambda'$} if,
for every $x\in\Lambda'$ and every $C^r$-neighborhood $\mathcal U$ of
$f$, there exist $g\in\mathcal U$ and a measurable set $D_g\subset M$
of positive Riemannian volume such that
\[
\lim_{n\to\infty}
\frac1n\sum_{i=0}^{n-1}
\sup_{y\in D_g}
\dist\bigl(g^i(y),g^i(x_g)\bigr)
=0.
\]
\end{definition}
By the definition of $d_W$, $C^r$-strong pluripotency implies
$C^r$-pluripotency, whereas the converse does not hold in general;
see \cite[Theorem~1.8]{KNS}.
Strong pluripotency requires uniform time-averaged closeness
to the reference orbit, whereas statistical pluripotency requires
only asymptotic agreement of empirical measures.
In both notions, the diversity of asymptotic behaviors that
can be reproduced depends on the choice of the reference set
$\Lambda'$.

Our first main result gives a complete obstruction to strong pluripotency
in the Anosov setting.

\begin{maintheorem}\label{thm:absence-strong-pluripotency}
Let $r\ge2$, and let $f:M\to M$ be a $C^r$ transitive Anosov
diffeomorphism on a compact connected Riemannian manifold $M$.
Then $f$ is not $C^r$-strongly pluripotent for any nonempty subset
$\Lambda'\subset M$.
\end{maintheorem}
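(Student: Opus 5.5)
The plan is to show something stronger than the theorem: for every $g$ in a small $C^r$-neighborhood of $f$, every point $p\in M$ (in particular $p=x_g$), and every measurable $D\subset M$ of positive volume,
\[
\liminf_{n\to\infty}\frac1n\sum_{i=0}^{n-1}\sup_{y\in D}\dist\bigl(g^i(y),g^i(p)\bigr)>0 .
\]
The reference point plays no role. Since $\sup_{y\in D}\dist(g^i y,g^i p)\ge \tfrac12\operatorname{diam}(g^i D)$, it suffices to show that $\operatorname{diam}(g^iD)$ stays bounded below by a fixed $\varepsilon>0$ for a set of times $i$ of positive lower density.

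\textbf{Step 1 (setup).} Since $f$ is Anosov, structural stability shows that every $g$ sufficiently $C^1$-close to $f$ is a transitive $C^r$ Anosov diffeomorphism, topologically conjugate to $f$. Here $r\ge2$, so $g$ is $C^2$. Hence the Sinai--Ruelle--Bowen theory applies: $g$ has a unique SRB measure $\mu_g$, its basin has full volume, and $\operatorname{supp}\mu_g=M$.

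\textbf{Step 2 (reduction to the mixing case).} By spectral decomposition there are $k\ge1$ and closed sets $\Omega_0,\dots,\Omega_{k-1}$, cyclically permuted by $g$, such that $g^k|_{\Omega_j}$ is topologically mixing. Each $\Omega_j$ is open and closed, so on a connected $M$ in fact $k=1$. Still, I will write the argument for $g^k$ on a piece $\Omega_j$ with $\operatorname{Leb}(D\cap\Omega_j)>0$, so that it does not depend on this observation. The normalized restriction $\mu_j$ of $\mu_g$ to $\Omega_j$ is the SRB measure of $g^k|_{\Omega_j}$, and it is mixing.

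\textbf{Step 3 (equidistribution of absolutely continuous measures).} Let $\nu$ be normalized Lebesgue measure on $D\cap\Omega_j$. The key input is
\[
(g^{kn})_*\nu\to\mu_j \quad\text{weak-}^*\text{ as } n\to\infty .
\]
The route is as follows. For a density $h$ that is Hölder (or smooth), this follows from exponential decay of correlations for the $C^2$ Anosov map $g^k|_{\Omega_j}$ against Lebesgue-absolutely continuous measures, via the Ruelle transfer-operator / Markov-partition argument of Bowen, Ruelle and Sinai. A general $L^1$ density, here $h=\mathbf 1_{D\cap\Omega_j}/\operatorname{Leb}(D\cap\Omega_j)$, is then handled by approximating $h$ in $L^1$ by smooth densities. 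This is legitimate because the pushforward preserves total variation, $\|(g^{kn})_*(\nu-\nu')\|\le\|\nu-\nu'\|$.

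\textbf{Step 4 (two far-apart balls).} Choose two balls $B_1,B_2\subset\Omega_j$ whose centers are $3\varepsilon$ apart and whose radii are less than $\varepsilon/2$. Full support gives $\mu_j(B_\ell)>0$. Applying the weak-$^*$ convergence to slightly smaller open balls (portmanteau) yields $(g^{kn})_*\nu(B_\ell)>0$ for $\ell=1,2$ and all large $n$. Hence $g^{kn}D$ meets both balls, so $\operatorname{diam}(g^{kn}D)\ge2\varepsilon$. The liminf of the time average is therefore at least $\varepsilon/k>0$.

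This contradicts Definition~\ref{def:Cr-strong-pluripotency} for every $x$ and every sufficiently small neighborhood $\mathcal U$. So $f$ is not $C^r$-strongly pluripotent for any nonempty $\Lambda'\subset M$.

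\textbf{Main obstacle.} The main obstacle is Step 3: we need convergence for an arbitrary positive-volume set $D$, not for a smooth density. This is exactly where $r\ge2$ enters, through the SRB theory, bounded distortion and absolute continuity of the stable foliation. I would derive it from the standard statement that, for transitive $C^2$ Anosov maps, $\frac1n\sum_{i<n}g^i_*m$, and in the mixing case $g^n_*m$ itself, converges to the SRB measure for every $m\ll\operatorname{Leb}$.

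If citing this is unavailable, the fallback is a geometric argument. Absolute continuity of the stable foliation gives a local unstable disk $W$ with $\operatorname{Leb}_W(D\cap W)>0$. Then $g^{kn}$ of a density-point neighborhood in $W$ equidistributes along unstable leaves toward the SRB conditional measures, by the standard construction of $\mu_g$ as a limit of pushed-forward leaf volumes.
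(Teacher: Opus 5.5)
Your proof is correct, but it takes a genuinely different route from the paper's.

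\textbf{The paper's route.} The paper never pushes forward Lebesgue measure on $D$. It passes to the product $G=g\times g$, checks that $G$ is a topologically mixing $C^2$ Anosov diffeomorphism with SRB measure $\mu_g\times\mu_g$, and uses only that $B_G(\mu_g\times\mu_g)$ has full volume in $M\times M$. Since $D\times D$ has positive volume, this gives a single pair $y,z\in D$ with $\frac1n\sum_{i<n}\dist(g^iy,g^iz)\to c_g=\int\dist\,d(\mu_g\times\mu_g)$. Positivity of $c_g$ comes from $\operatorname{supp}\mu_g=M$, and the triangle inequality through $x_g$ gives the contradiction.

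\textbf{Your route.} You replace the product-basin argument by Lebesgue mixing, $g^n_*\nu\to\mu_g$ for $\nu\ll\operatorname{vol}_M$. Combined with full support, this forces $\operatorname{diam}(g^nD)\ge2\varepsilon$ for all large $n$.

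\textbf{What each buys.}
\begin{enumerate}
\item Your bound holds at every large time. So you get $\liminf_n\sup_{y\in D}\dist(g^ny,g^np)\ge\varepsilon$, not just a positive Ces\`aro mean.
\item Your argument does use the supremum over $D$ essentially, since different points of $D$ realize the separation at different times. The paper's pair argument needs no such uniformity. It already rules out the weaker requirement that $\frac1n\sum_{i<n}\dist(g^iy,g^ix_g)\to0$ for each $y\in D_g$ separately.
\item Independence from the reference point, which you present as a strengthening, is also implicit in the paper. Its argument gives the lower bound $c_g/2$ for any reference orbit.
\end{enumerate}

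\textbf{Inputs.} The paper needs only the full-volume-basin theorem, applied to $g\times g$. You need convergence of $g^n_*\nu$ itself. That is stronger than the Ces\`aro convergence $\frac1n\sum_{i<n}g^i_*\nu\to\mu_g$ that the full-volume basin of $g$ alone provides. It is nonetheless standard for mixing $C^2$ Anosov (indeed mixing Axiom~A attractor) systems.

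\textbf{A caution on Step~3.} The Markov-partition decay of correlations of Bowen and Ruelle concerns $\mu_g$-correlations, not correlations against Lebesgue. The Lebesgue version you invoke is therefore an additional, though standard, result. Citing the convergence $g^n_*m\to\mu_g$ for absolutely continuous $m$ directly, as you propose under ``Main obstacle,'' is the cleaner way to go.
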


We next turn to statistical pluripotency.
The following theorem gives a complete characterization of the nonempty subsets for which this property holds.
For a transitive Anosov diffeomorphism, these are exactly the nonempty subsets of the basin of its unique SRB measure.

\begin{maintheorem}\label{thm:characterization-statistical-pluripotency}
Let $r\ge2$, and let $f:M\to M$ be a $C^r$ transitive
Anosov diffeomorphism on a compact connected Riemannian
manifold $M$.
Let $\mu_f$ be its unique SRB measure, and let
$B_f(\mu_f)$ denote its basin.
Then, for every nonempty subset $\Lambda'\subset M$,
\[
f\text{ is }C^r\text{-pluripotent for }\Lambda'
\quad\Longleftrightarrow\quad
\Lambda'\subset B_f(\mu_f).
\]
Moreover, $B_f(\mu_f)$ is an $f$-invariant measurable
set of full Riemannian volume.
\end{maintheorem}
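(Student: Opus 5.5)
We prove the two directions separately, using three standard facts about $C^r$ ($r\ge2$) transitive Anosov diffeomorphisms. First, Anosov diffeomorphisms are $C^1$-open, and transitivity persists on a $C^1$-neighborhood (on the connected manifold $M$ the continuation of $M$ is all of $M$). Second, every $g$ in this neighborhood has a unique SRB measure $\mu_g$, and its basin $B_g(\mu_g)$ has full Riemannian volume; this is Sinai--Ruelle--Bowen theory. Third, $g$ is topologically conjugate to $f$ by $H_g$, so the continuation $x_g=H_g(x)$ is defined for every $x\in M$.

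For the statement ``moreover'', invariance is immediate: $\delta^n_{f(y),f}$ and $\delta^n_{y,f}$ differ by $(\delta_{f^n(y)}-\delta_y)/n$, which tends to $0$ in $d_W$. Measurability follows from writing the basin as
\[
\bigcap_{k}\bigcup_N\bigcap_{n\ge N}\bigl\{y : d_W(\delta^n_{y,f},\mu_f)\le 1/k\bigr\},
\]
since each set in this expression is closed. Full volume is the SRB theorem.

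\emph{($\Rightarrow$).} Let $x\in\Lambda'$. Apply pluripotency with a $C^r$-neighborhood $\mathcal U$ small enough that every $g\in\mathcal U$ is transitive Anosov. This gives some $g\in\mathcal U$ and a set $D_g$ of positive volume. Since $B_g(\mu_g)$ has full volume, we can pick $y\in D_g\cap B_g(\mu_g)$. Then $\delta^n_{x_g,g}\to\mu_g$, that is, $x_g\in B_g(\mu_g)$.

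We must now transfer this back to $f$, and this is the main obstacle. The conjugacy $H_g$ is only Hölder, so it does not carry SRB measures to SRB measures. The fix is to use the freedom in choosing $\mathcal U$. I would run the argument along a sequence $g_k\to f$ in $C^r$, with corresponding points $x_k:=H_{g_k}(x)\in B_{g_k}(\mu_{g_k})$. We know $H_{g_k}\to\mathrm{id}$ uniformly and $\mu_{g_k}\to\mu_f$ weakly; the latter is the continuity of SRB measures for $C^2$ Anosov maps, for instance via continuity of the unstable Jacobian potential and equilibrium states.

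Fix a Lipschitz test function $\varphi$. Then
\[
\frac1n\sum_{i<n}\varphi\bigl(f^i x\bigr)=\frac1n\sum_{i<n}\varphi\circ H_{g_k}^{-1}\bigl(g_k^i x_k\bigr).
\]
As $n\to\infty$, the right-hand side tends to $\int\varphi\circ H_{g_k}^{-1}\,d\mu_{g_k}=\int\varphi\, d\bigl((H_{g_k}^{-1})_*\mu_{g_k}\bigr)$. Note that $(H_{g_k}^{-1})_*\mu_{g_k}$ is an $f$-invariant measure that depends on $k$. Hence the Birkhoff averages of $\varphi$ along $x$ converge, and their limit equals this integral for every $k$. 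Letting $k\to\infty$ and using $H_{g_k}^{-1}\to\mathrm{id}$ together with $\mu_{g_k}\to\mu_f$, the limit is $\int\varphi\,d\mu_f$. So $x\in B_f(\mu_f)$. This step is genuinely needed: a single $g$ only tells us that $x$ is generic for the pulled-back measure, and it is the limit over $k$ that identifies that measure with $\mu_f$.

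\emph{($\Leftarrow$).} Let $x\in B_f(\mu_f)$ and let $\mathcal U$ be given. The trivial choice $g=f$ already lies in $\mathcal U$, and in that case $x_g=x$. Take $D_g=B_f(\mu_f)$, which has positive (indeed full) volume. For each $y\in D_g$, both $\delta^n_{y,f}$ and $\delta^n_{x,f}$ converge to $\mu_f$, so $d_W\bigl(\delta^n_{y,f},\delta^n_{x,f}\bigr)\to0$, as required.
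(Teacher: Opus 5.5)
Your proof is correct, and most of it matches the paper: the sufficiency direction, the invariance of the basin, and the first half of necessity. In particular, you take a sequence $g_k\to f$ and a point $y_k\in D_k\cap B_{g_k}(\mu_{g_k})$, and deduce that $\delta^n_{x,f}\to(H_{g_k}^{-1})_*\mu_{g_k}$ for every $k$, exactly as the paper does.

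The difference is in identifying this limit with $\mu_f$. You invoke the weak-$*$ continuity of $g\mapsto\mu_g$ at $f$ as a black box, together with $H_{g_k}^{-1}\to\mathrm{id}$ uniformly. The paper instead uses uniqueness of weak-$*$ limits to conclude that $(H_{g_k}^{-1})_*\mu_{g_k}$ is a single measure $\nu$, independent of $k$. Conjugacy invariance of entropy then turns the equilibrium identity for $\mu_{g_k}$ into $h_\nu(f)+\int\phi^{(u)}_{g_k}\circ H_{g_k}\,d\nu=0$. Because the entropy term does not vary with $k$, the uniform convergence $\phi^{(u)}_{g_k}\circ H_{g_k}\to\phi^{(u)}_f$ alone shows that $\nu$ is an equilibrium state for $\phi^{(u)}_f$, hence $\nu=\mu_f$.

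The paper's route is therefore self-contained within Bowen's theory. It avoids the upper semicontinuity of entropy (via expansiveness) that a proof of the general continuity theorem you cite would need, since there the measures vary with $k$. Your route is shorter once that standard theorem is granted. Your explicit $F_{\sigma\delta}$ description showing that the basin is Borel is a small addition that the paper leaves implicit.
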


As we show in Section~\ref{s.pluripotency}, the basin of the unique SRB measure contains no periodic point.
Since a transitive Anosov diffeomorphism has periodic points, this basin is a proper subset of $M$.
Taking $\Lambda'=M$ in Theorem~\ref{thm:characterization-statistical-pluripotency} gives the following immediate consequence.

\begin{corollary}\label{cor:absence-pluripotency}
  Let $r\geq2$.
  No $C^r$ transitive Anosov diffeomorphism on a compact connected Riemannian manifold $M$ is $C^r$-pluripotent for $M$.
\end{corollary}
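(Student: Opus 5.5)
The corollary is obtained by taking $\Lambda'=M$ in Theorem~\ref{thm:characterization-statistical-pluripotency}. That theorem says $f$ is $C^r$-pluripotent for $M$ if and only if $M\subset B_f(\mu_f)$. It therefore suffices to exhibit a point of $M$ outside $B_f(\mu_f)$. The natural choice is a periodic point, and the argument reduces to two facts: a periodic point exists, and no periodic point lies in the basin.

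\emph{Existence of a periodic point.} Since $f$ is Anosov, the whole manifold $M$ is a uniformly hyperbolic set. Transitivity and the Anosov closing lemma, or the spectral decomposition, give that periodic points are dense in the nonwandering set $\Omega(f)=M$. In particular there is some $p$ with $f^k(p)=p$ for some $k\ge1$.

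\emph{Periodic points are not in the basin.} For such $p$, the empirical measures $\delta^n_{p,f}$ converge to the periodic orbit measure
\[
\nu_p=\frac1k\sum_{i=0}^{k-1}\delta_{f^i(p)}.
\]
If $p$ were in $B_f(\mu_f)$, we would have $\mu_f=\nu_p$. But the SRB measure of a $C^2$ transitive Anosov diffeomorphism has absolutely continuous conditional measures on unstable manifolds. These unstable manifolds are nontrivial immersed submanifolds, since $\dim E^u\ge1$ (if $E^u=0$, $M$ would be a single attracting periodic orbit, impossible for a connected manifold of positive dimension). Hence $\mu_f$ has no atoms, whereas $\nu_p$ is atomic, a contradiction. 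An alternative is to use the Pesin/Ruelle entropy formula: $h_{\mu_f}(f)=\int\log|\det Df|_{E^u}|\,d\mu_f>0$, while $h_{\nu_p}(f)=0$.

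Combining the two steps, $p\notin B_f(\mu_f)$, so $M\not\subset B_f(\mu_f)$. By Theorem~\ref{thm:characterization-statistical-pluripotency}, $f$ is not $C^r$-pluripotent for $M$. The only step needing care is the non-atomicity of $\mu_f$, and this is standard SRB theory in the regularity $r\ge2$ assumed in the statement.
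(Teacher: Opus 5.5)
Your proposal is correct and follows the paper's proof: exhibit a periodic point $p$, show $p\notin B_f(\mu_f)$ because $\delta^n_{p,f}\to\nu_p\neq\mu_f$, and apply Theorem~\ref{thm:characterization-statistical-pluripotency} with $\Lambda'=M$. The only difference is how you separate $\mu_f$ from $\nu_p$: the paper compares $\operatorname{supp}\mu_f=M$, already proved in Lemma~\ref{lem:SRB-basin} via the Gibbs property, with the finite support of $\nu_p$, whereas your non-atomicity argument (or the entropy comparison $h_{\mu_f}(f)>0=h_{\nu_p}(f)$) draws on slightly more SRB theory but is equally valid.
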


\begin{remark}[Robustness of the main results]
  For each fixed $r\ge2$, the conclusions of Theorem~\ref{thm:absence-strong-pluripotency}, Theorem~\ref{thm:characterization-statistical-pluripotency} and Corollary~\ref{cor:absence-pluripotency} are $C^r$-robust.
  Indeed, by the $C^1$ openness and structural stability of Anosov diffeomorphisms, there exists a $C^1$-neighborhood $\mathcal V$ of $f$ such that every $g\in\mathcal V$ is Anosov and topologically conjugate to $f$. Hence there exists a $C^r$-neighborhood $\mathcal U$ of $f$ such that $ \mathcal U\subset \mathcal V\cap\Diff^r(M).$
  Every $g\in\mathcal U$ is therefore a transitive Anosov diffeomorphism.
  Consequently, every such $g$ is neither $C^r$-strongly pluripotent for any nonempty subset of $M$ nor $C^r$-pluripotent for $M$.

  Nevertheless, for every $g\in\mathcal U$ and every nonempty subset $\Lambda'\subset M$, $g$ is $C^r$-pluripotent for $\Lambda'$ if and only if $\Lambda'\subset B_g(\mu_g)$, where $\mu_g$ is the unique SRB measure of $g$.
  The basin $B_g(\mu_g)$ is a $g$-invariant measurable set of full Riemannian volume.
\end{remark}

The pluripotency in Theorem~\ref{thm:characterization-statistical-pluripotency} arises from the convergence of empirical measures to a common SRB measure, rather than from the flexibility to reproduce diverse asymptotic behaviors.

The assumption $r\ge2$ is made for convenience in applying the
classical SRB theory. In view of the $C^{1+\alpha}$ results of
Bressaud and Liverani~\cite{BL02}, the regularity assumption $r\ge2$ is probably not sharp.
For related developments of SRB and physical measure theory beyond
the uniformly hyperbolic setting, see, for instance,
Mi, Cao, and Yang~\cite{MCY17}.

The remainder of the paper is organized as follows.
Section~\ref{s.absence} contains the proof of
Theorem~\ref{thm:absence-strong-pluripotency}, and
Section~\ref{s.pluripotency} contains the proof of
Theorem~\ref{thm:characterization-statistical-pluripotency}.

\section{Absence of strong pluripotency}\label{s.absence}

We use the following standard consequence of Bowen's theory of
equilibrium states for $C^2$ Anosov diffeomorphisms.

Throughout this section, $\operatorname{vol}_M$ denotes the Riemannian
volume on $M$. We equip $M\times M$ with the product Riemannian metric
and denote its Riemannian volume by $\operatorname{vol}_{M\times M}$.
Thus
\[
\operatorname{vol}_{M\times M}
=
\operatorname{vol}_M\times\operatorname{vol}_M.
\]

For a continuous map $F:M\to M$ and an $F$-invariant probability
measure $\mu$, we denote by
\[
B_F(\mu):=
\left\{
x\in M:
\lim_{n \to \infty}\frac1n\sum_{i=0}^{n-1}\varphi(F^i(x))
=
\int_M\varphi\,d\mu
\text{ for every }\varphi\in C^0(M)
\right\}
\]
the basin of $\mu$.

The following standard consequences of Bowen's theory will be useful, including the product formulation needed below.

\begin{lemma}\label{lem:SRB-basin}
Let $g:M\to M$ be a $C^2$ transitive Anosov diffeomorphism on a compact
connected Riemannian manifold $M$.
Then $g$ admits a unique SRB measure $\mu_g$ satisfying
\[
\operatorname{vol}_M(B_g(\mu_g))
=
\operatorname{vol}_M(M),
\qquad
\operatorname{supp}\mu_g=M.
\]

Moreover, if
\[
G:=g\times g:M\times M\to M\times M,
\]
then $G$ is a topologically mixing Anosov diffeomorphism,
$\mu_g\times\mu_g$ is its SRB measure, and
\[
\operatorname{vol}_{M\times M}
\bigl(B_G(\mu_g\times\mu_g)\bigr)
=
\operatorname{vol}_{M\times M}(M\times M).
\]
\end{lemma}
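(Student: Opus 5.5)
We will assemble the lemma from classical results of Sinai, Ruelle and Bowen, treating the single factor first and then the product. For a $C^2$ transitive Anosov diffeomorphism $g$ on a connected manifold, the spectral decomposition consists of a single basic set, namely $M$ itself. Transitivity together with connectedness gives topological mixing: the mixing components of the spectral decomposition are cyclically permuted closed-open sets, so there is only one. Bowen's theory of equilibrium states then applies to the Hölder potential $\phi^u=-\log|\det Dg|_{E^u}|$. It produces a unique equilibrium state $\mu_g$, and by the Sinai--Ruelle--Bowen theorem this is the unique SRB measure. It satisfies $\operatorname{vol}_M(B_g(\mu_g))=\operatorname{vol}_M(M)$; this is Bowen's volume lemma and the absolute continuity of the stable foliation, which require $C^2$. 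Since $\mu_g$ is a Gibbs measure for a Hölder potential on a mixing Anosov system, it gives positive mass to every open set, and hence $\operatorname{supp}\mu_g=M$.

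For the product, set $G=g\times g$ on $M\times M$ with the product metric. The splitting $E^s_G=E^s\oplus E^s$ and $E^u_G=E^u\oplus E^u$ is continuous, $DG$-invariant and uniformly hyperbolic with the same constants, so $G$ is Anosov and $C^2$. Topological mixing of $G$ follows directly from mixing of $g$. Given open $U_1\times U_2$ and $V_1\times V_2$, choose $N$ so that $g^n(U_i)\cap V_i\neq\emptyset$ for $i=1,2$ and all $n\ge N$; then $G^n(U_1\times U_2)\cap(V_1\times V_2)\neq\emptyset$ for all $n\ge N$, and products of open sets form a basis. Since $M\times M$ is connected, the first part applies to $G$ and yields a unique SRB measure $\mu_G$ whose basin has full volume.

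It remains to identify $\mu_G$ with $\mu_g\times\mu_g$. I would first check that the product measure is $G$-invariant. Next, the unstable Jacobian splits: $\det DG|_{E^u_G}(x,y)=\det Dg|_{E^u}(x)\cdot\det Dg|_{E^u}(y)$. Hence $\phi^u_G(x,y)=\phi^u(x)+\phi^u(y)$, and we obtain
\[
\int\phi^u_G\,d(\mu_g\times\mu_g)=2\int\phi^u\,d\mu_g .
\]
The entropy is also additive, $h_{\mu_g\times\mu_g}(G)=2h_{\mu_g}(g)$; the inequality $\le$ is standard, and equality holds for product measures. By the Pesin formula for $\mu_g$ (equivalently, $P(\phi^u)=0$), this gives
\[
h_{\mu_g\times\mu_g}(G)+\int\phi^u_G\,d(\mu_g\times\mu_g)=0=P_G(\phi^u_G).
\]
Here the pressure satisfies $P_G(\phi^u_G)=2P_g(\phi^u)=0$, since pressure is additive for product systems with sum potentials. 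So $\mu_g\times\mu_g$ is an equilibrium state for $\phi^u_G$, and by uniqueness it equals $\mu_G$. The full-volume statement for $B_G(\mu_g\times\mu_g)$ then comes directly from the first part applied to $G$, with $\operatorname{vol}_{M\times M}=\operatorname{vol}_M\times\operatorname{vol}_M$.

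The main obstacle is the identification in the last step, specifically the equality of entropies and of pressures for the product. If one prefers to avoid entropy additivity, an alternative route is via the basins. For $\operatorname{vol}\times\operatorname{vol}$-a.e. $(x,y)$, both $x$ and $y$ lie in $B_g(\mu_g)$. However, this alone does not yield joint convergence to $\mu_g\times\mu_g$ on test functions that are not products. One would still need the SRB characterization of $\mu_G$, namely absolutely continuous conditional measures on $W^u_G=W^u\times W^u$. These are products of the absolutely continuous conditionals of $\mu_g$, and that gives the identification directly. I would present the entropy/pressure argument as the main route and note the conditional-measure characterization as a check.
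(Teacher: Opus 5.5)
Your proposal is correct and follows essentially the same route as the paper. Both arguments take the equilibrium state of the unstable Jacobian potential, use the splitting $\phi^u_G=\phi^u(x)+\phi^u(y)$ and entropy additivity to identify $\mu_g\times\mu_g$ as the equilibrium state (hence the SRB measure) of $G$, and then apply Bowen's full-volume basin theorem to $G$. The only differences are cosmetic: you obtain $P_G(\phi^u_G)=0$ from additivity of pressure rather than from Bowen's result that this pressure vanishes on the attractor $M\times M$, and you spell out the mixing arguments for $g$ and $G$ that the paper merely asserts.
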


\begin{proof}
Let $$\phi_g^{(u)}(x):=-\log
|\det(Dg(x)|_{E^u_{g,x}})|$$
denote the unstable Jacobian potential of $g$,
where $E^u_{g,x}$ is the unstable subspace at $x$.
By Bowen~\cite[Theorem~4.1]{Bow08},
$\phi_g^{(u)}$ admits a unique equilibrium state
$\widetilde{\mu}_g$.

Since $M$ is an attractor for $g$, Bowen
\cite[Proposition~4.8]{Bow08} gives
\[
P_g(\phi_g^{(u)})=0,
\]
where $P_g(\phi_g^{(u)})$ denotes the topological pressure
of $\phi_g^{(u)}$ with respect to $g$.

Since $\widetilde{\mu}_g$ is the equilibrium state for
$\phi_g^{(u)}$, it follows that
\[
h_{\widetilde{\mu}_g}(g)
+
\int_M \phi_g^{(u)}\,d\widetilde{\mu}_g
=
0,
\]
where $h_{\widetilde{\mu}_g}(g)$ denotes the Kolmogorov--Sinai entropy.
Equivalently, by the definition of the unstable Jacobian potential,
\[
h_{\widetilde{\mu}_g}(g)
=
\int_M
\log\left|
\det\bigl(Dg_x|_{E^u_{g,x}}\bigr)
\right|
\,d\widetilde{\mu}_g(x).
\]
Thus, by the characterization of SRB measures via Pesin's
entropy formula in \cite[Theorem~2]{Y2002},
$\widetilde{\mu}_g$ is an SRB measure of $g$.
Since $M$ is a transitive Axiom A attractor,
this SRB
measure is unique by \cite[Theorem~1]{Y2002}.
Hence
\[
\widetilde{\mu}_g=\mu_g.
\]

By Bowen \cite[Theorem~4.12]{Bow08},
\[
\operatorname{vol}_M\bigl(B_g(\mu_g)\bigr)
=
\operatorname{vol}_M(M).
\]
Moreover,
\[
\operatorname{supp}\mu_g=M.
\]
Indeed, by the Gibbs estimate for the equilibrium state,
Bowen \cite[Proposition~4.4]{Bow08}, every sufficiently small
Bowen ball $B_n(x,\varepsilon)$ has positive $\mu_g$-measure.
Here, the Bowen ball $B_n(x,\varepsilon)$ is defined by
\[
B_n(x,\varepsilon)
:=\{y\in M:\dist(g^i(y),g^i(x))<\varepsilon
\text{ for }0\le i<n\},
\]
and $B(x,\varepsilon)$ is the metric ball.
Since
\[
B_n(x,\varepsilon)\subset B(x,\varepsilon),
\]
we have $\mu_g(B(x,\varepsilon))>0$. Since $\varepsilon>0$ can be
chosen arbitrarily small, every neighborhood of $x$ has positive
$\mu_g$-measure.
 Hence
$x\in\operatorname{supp}\mu_g$ for every $x\in M$, and therefore
$\operatorname{supp}\mu_g=M$.

We shall apply Bowen's theorem also to $G=g\times g$. For this purpose,
it remains to verify that $G$ is a transitive Anosov diffeomorphism and
that its SRB measure is $\mu_g\times\mu_g$.
Since $M$ is connected and $g$ is transitive Anosov, $g$ is topologically
mixing. Hence $G=g\times g$ is also topologically mixing and Anosov.

Let $\phi_G^{(u)}$ denote the unstable Jacobian potential of $G$; thus
\[
\phi_G^{(u)}(x,y)
=
-\log \left|
\det\bigl(DG_{(x,y)}|_{E^u_{G,(x,y)}}\bigr)
\right|.
\]
Since
$
E^u_{G,(x,y)}
=
E^u_{g,x}\oplus E^u_{g,y},
$
we have
$
DG_{(x,y)}|_{E^u_{G,(x,y)}}
=
Dg_x|_{E^u_{g,x}}
\oplus
Dg_y|_{E^u_{g,y}},
$
and hence
\[
\left|
\det\bigl(DG_{(x,y)}|_{E^u_{G,(x,y)}}\bigr)
\right|
=
\left|
\det\bigl(Dg_x|_{E^u_{g,x}}\bigr)
\right|
\left|
\det\bigl(Dg_y|_{E^u_{g,y}}\bigr)
\right|.
\]
Therefore,
\[
\phi_G^{(u)}(x,y)
=
\phi_g^{(u)}(x)+\phi_g^{(u)}(y).
\]

Moreover, the product formula for measure-theoretic entropy gives
\[
h_{\mu_g\times\mu_g}(G)
=
2h_{\mu_g}(g).
\]
Together with the preceding identity for $\phi_G^{(u)}$, this yields
\[
\begin{aligned}
h_{\mu_g\times\mu_g}(G)
+
\int_{M\times M}\phi_G^{(u)}\,d(\mu_g\times\mu_g)
&=
2\left(
h_{\mu_g}(g)
+
\int_M\phi_g^{(u)}\,d\mu_g
\right)\\
&=0.
\end{aligned}
\]
Since $M\times M$ is an attractor for $G$, Bowen
\cite[Proposition~4.8]{Bow08} gives
\[
P_G(\phi_G^{(u)})=0.
\]
Hence $\mu_g\times\mu_g$ is the unique equilibrium state for
$\phi_G^{(u)}$. By the same characterization as above,
\cite[Theorem~1]{Y2002}, it is the SRB measure of $G$.

Applying Bowen \cite[Theorem~4.12]{Bow08} to $G$, we obtain
\[
\operatorname{vol}_{M\times M}
\bigl(B_G(\mu_g\times\mu_g)\bigr)
=
\operatorname{vol}_{M\times M}(M\times M).
\]
This completes the proof.
\end{proof}

To prove Theorem \ref{thm:absence-strong-pluripotency},
we first establish the following consequence of
Lemma~\ref{lem:SRB-basin}.

\begin{lemma}\label{lem:positive-mean-separation}
Let $g:M\to M$ be a $C^2$ transitive Anosov diffeomorphism on a compact
connected Riemannian manifold $M$, and let $D\subset M$ be a measurable
set of positive Riemannian volume. Then there exist $y,z\in D$ such that
\[
\lim_{n\to\infty}
\frac1n\sum_{i=0}^{n-1}
\dist\bigl(g^i(y),g^i(z)\bigr)
=
c_g>0,
\]
where
\[
c_g:=
\int_{M\times M}\dist(a,b)\,
d(\mu_g\times\mu_g)(a,b).
\]
\end{lemma}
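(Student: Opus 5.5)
The plan is to apply the product statement of Lemma~\ref{lem:SRB-basin} to $G=g\times g$ with the test function $\varphi(a,b):=\dist(a,b)$, and then show $c_g>0$ using $\operatorname{supp}\mu_g=M$.

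First, since $D$ is measurable with $\operatorname{vol}_M(D)>0$, the product $D\times D$ is measurable in $M\times M$. It has
\[
\operatorname{vol}_{M\times M}(D\times D)=\operatorname{vol}_M(D)^2>0.
\]
By Lemma~\ref{lem:SRB-basin}, the basin $B_G(\mu_g\times\mu_g)$ has full $\operatorname{vol}_{M\times M}$-measure. Hence $(D\times D)\cap B_G(\mu_g\times\mu_g)$ has positive volume and is in particular nonempty. Choose $(y,z)$ in this intersection.

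Second, the function $\varphi(a,b)=\dist(a,b)$ is continuous on $M\times M$ and satisfies $\varphi(G^i(y,z))=\dist(g^i(y),g^i(z))$. The definition of the basin applied to $\varphi\in C^0(M\times M)$ then gives
\[
\lim_{n\to\infty}\frac1n\sum_{i=0}^{n-1}\dist\bigl(g^i(y),g^i(z)\bigr)=\int_{M\times M}\dist\,d(\mu_g\times\mu_g)=c_g .
\]

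Third, we show $c_g>0$. Because $M$ is compact and connected and carries a transitive Anosov diffeomorphism, it has positive dimension and so contains two distinct points $p\neq q$. Set $\varepsilon=\dist(p,q)/3$. Since $\operatorname{supp}\mu_g=M$ by Lemma~\ref{lem:SRB-basin}, both balls $B(p,\varepsilon)$ and $B(q,\varepsilon)$ have positive $\mu_g$-measure. On $B(p,\varepsilon)\times B(q,\varepsilon)$ we have $\dist(a,b)\ge\varepsilon$. Therefore
\[
c_g\ge\varepsilon\,\mu_g(B(p,\varepsilon))\,\mu_g(B(q,\varepsilon))>0.
\]

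None of these steps is a real obstacle, since the analytic content is already in the product assertion of Lemma~\ref{lem:SRB-basin}. The only points needing care are the measurability and positive volume of $D\times D$, which follow from Fubini, and the existence of two distinct points in $M$ for the positivity of $c_g$.
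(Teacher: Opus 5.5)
Your proposal is correct and follows essentially the same route as the paper's proof. Both pick $(y,z)\in (D\times D)\cap B_G(\mu_g\times\mu_g)$ using the full-volume product basin from Lemma~\ref{lem:SRB-basin}, apply the basin definition to $\varphi(a,b)=\dist(a,b)$, and get $c_g>0$ from $\operatorname{supp}\mu_g=M$ via two separated neighborhoods of distinct points $p\neq q$.
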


\begin{proof}
By Lemma~\ref{lem:SRB-basin}, the basin
$B_G(\mu_g\times\mu_g)$ has full Riemannian volume in $M\times M$.
Since $D$ has positive Riemannian volume, $D\times D$ has positive
Riemannian volume in $M\times M$. Hence we can choose
\[
(y,z)\in (D\times D)\cap B_G(\mu_g\times\mu_g).
\]
Applying the definition of the basin to the continuous observable
\[
\varphi(a,b):=\dist(a,b),
\]
and using $G=g\times g$, we obtain
\begin{equation*}
  \lim_{n \to \infty} \frac1n \sum_{i=0}^{n-1} \dist\bigl(g^i(y),g^i(z)\bigr)
  = \int_{M\times M}\dist(a,b)\,d(\mu_g\times\mu_g)(a,b).
\end{equation*}
The right-hand side equals $c_g$.

It remains to show that $c_g>0$.
By
Lemma~\ref{lem:SRB-basin},
\[
\operatorname{supp}\mu_g=M.
\]
Choose distinct points $p,q\in M$. Then there exist nonempty open
neighborhoods $U$ of $p$ and $V$ of $q$, and $\delta>0$, such that
\[
\dist(a,b)\ge\delta
\qquad
\text{for all }a\in U,\ b\in V.
\]
Since $\mu_g$ has full support,
\[
\mu_g(U)>0
\qquad\text{and}\qquad
\mu_g(V)>0.
\]
Consequently,
\[
c_g
\geq
\int_{U\times V}\dist(a,b)\,
d(\mu_g\times\mu_g)(a,b)
\geq
\delta\,\mu_g(U)\mu_g(V)
>0.
\]
This completes the proof.
\end{proof}

\begin{proof}[Proof of Theorem \ref{thm:absence-strong-pluripotency}]
Suppose, to the contrary, that $f$ is $C^r$-strongly pluripotent for some nonempty subset $\Lambda'\subset M$.
Fix $x\in\Lambda'$.

By structural stability, we may choose a $C^r$-neighborhood $\mathcal U$ of $f$ such that every $g\in\mathcal U$ is a transitive Anosov diffeomorphism.

By the $C^r$-strong pluripotency of $f$, there exist $g\in\mathcal U$ and a measurable set $D_g\subset M$ of positive Riemannian volume such that
\[
\lim_{n\to\infty}
\frac1n\sum_{i=0}^{n-1}
\sup_{w\in D_g}
\dist\bigl(g^i(w),g^i(x_g)\bigr)
=0.
\]

Applying Lemma~\ref{lem:positive-mean-separation} to $g$ and $D_g$, we can choose $y,z\in D_g$ such that
\[
\lim_{n\to\infty}
\frac1n\sum_{i=0}^{n-1}
\dist\bigl(g^i(y),g^i(z)\bigr)
=
c_g>0.
\]

On the other hand, for every $i\geq0$, the triangle inequality gives
\[
\begin{aligned}
\dist\bigl(g^i(y),g^i(z)\bigr)
&\leq
\dist\bigl(g^i(y),g^i(x_g)\bigr)
+
\dist\bigl(g^i(z),g^i(x_g)\bigr)\\
&\leq
2\sup_{w\in D_g}
\dist\bigl(g^i(w),g^i(x_g)\bigr).
\end{aligned}
\]
Averaging over $i=0,\ldots,n-1$, we obtain
\[
\frac1n\sum_{i=0}^{n-1}
\dist\bigl(g^i(y),g^i(z)\bigr)
\le
2\frac1n\sum_{i=0}^{n-1}
\sup_{w\in D_g}
\dist\bigl(g^i(w),g^i(x_g)\bigr).
\]
Letting $n\to\infty$, the left-hand side converges to $c_g>0$, whereas the right-hand side converges to $0$.
Hence
\[
c_g\le0,
\]
a contradiction.

Thus no nonempty subset $\Lambda'\subset M$ can satisfy the definition of $C^r$-strong pluripotency. This completes the proof.
\end{proof}

\section{Statistical pluripotency for Anosov diffeomorphisms}\label{s.pluripotency}

In this section, we prove
Theorem~\ref{thm:characterization-statistical-pluripotency} using Lemma~\ref{lem:SRB-basin}.
For the necessity, we use structural stability, uniqueness of weak-* limits, and the characterization
of the SRB measure as the unique equilibrium state
for the unstable Jacobian potential.
For the sufficiency, we simply take $g=f$.

\begin{proof}[Proof of Theorem~\ref{thm:characterization-statistical-pluripotency}]

Assume that $f$ is $C^r$-pluripotent for $\Lambda'$, and fix an arbitrary $x \in \Lambda'$. We show that $x \in B_f(\mu_f)$.

By the definition of pluripotency, there exist a sequence $g_k \to f$ in the $C^r$ topology and measurable sets $D_k \subset M$ of positive Riemannian volume such that
\begin{equation*}
    \lim_{n \to \infty}d_W \left(\delta_{y,g_k}^{n},\delta_{x_{g_k},g_k}^{n}\right)
    = 0
\end{equation*}
for every $y \in D_k$.
We may choose all $g_k$ in a sufficiently small neighborhood of $f$ so that each $g_k$ is a transitive Anosov diffeomorphism.

By structural stability, let $H_k:M\to M$ be the conjugacy defining the continuation $x_{g_k}=H_k(x)$.
Then
\[
g_k\circ H_k=H_k\circ f,
\qquad
H_k\to\mathrm{id}_M
\quad\text{uniformly}.
\]

By Lemma~\ref{lem:SRB-basin}, $g_k$ admits a unique SRB measure $\mu_{g_k}$ whose basin $B_{g_k}(\mu_{g_k})$ has full Riemannian volume.
Since $D_k$ has positive volume, we may choose
\begin{equation*}
  y_k \in D_k \cap B_{g_k}(\mu_{g_k}).
\end{equation*}

Then $\delta^n_{y_k,g_k}\to\mu_{g_k}$, and the pluripotency condition and the triangle inequality imply that
\begin{equation}\label{001}
  \lim_{n \to \infty}\delta^n_{x_{g_k},g_k}
  = \mu_{g_k}
\end{equation}
for each fixed $k$.
Since $H_k^{-1}$ is continuous, applying $(H_k^{-1})_*$ to \eqref{001} yields
\begin{equation}\label{002}
  \lim_{n \to \infty}\delta_{x,f}^n
  = (H_k^{-1})_*\mu_{g_k}.
\end{equation}

Since the sequence $\delta_{x,f}^n$ is independent of $k$ and the weak-* topology is Hausdorff, the limit in \eqref{002} is independent of $k$.
Thus there exists a probability measure $\nu$ such that
\[
(H_k^{-1})_*\mu_{g_k}=\nu
\qquad\text{for every }k.
\]
As $\nu$ is a weak-* limit of empirical measures of $f$, it is $f$-invariant.

It remains to identify $\nu$.

As shown in the proof of Lemma~\ref{lem:SRB-basin}, $\mu_{g_k}$ is the unique equilibrium state for $\phi_{g_k}^{(u)}$, and $P_{g_k}(\phi_{g_k}^{(u)})=0$.
Hence
\[
h_{\mu_{g_k}}(g_k)
+\int_M\phi_{g_k}^{(u)}\,d\mu_{g_k}=0.
\]
Since $(H_k)_*\nu=\mu_{g_k}$, invariance of measure-theoretic entropy under conjugacy and the pushforward integral formula give
\begin{equation}\label{003}
  h_\nu(f)
  +\int_M\phi_{g_k}^{(u)}\circ H_k\,d\nu=0.
\end{equation}

Since $g_k\to f$ in the $C^1$ topology, the unstable bundles converge uniformly, and hence $\phi_{g_k}^{(u)}\to\phi_f^{(u)}$ uniformly.
Together with $H_k\to\mathrm{id}_M$ uniformly and the uniform continuity of $\phi_f^{(u)}$, this yields
\[
\|\phi_{g_k}^{(u)}\circ H_k -\phi_f^{(u)}\|_\infty \leq
\|\phi_{g_k}^{(u)}-\phi_f^{(u)}\|_\infty +\|\phi_f^{(u)}\circ H_k-\phi_f^{(u)}\|_\infty
 \to 0.
\]
Letting $k\to\infty$ in \eqref{003}, we obtain
\[
h_\nu(f)+\int_M\phi_f^{(u)}\,d\nu
=0=P_f(\phi_f^{(u)}).
\]
Thus $\nu$ is an equilibrium state for $\phi_f^{(u)}$, and hence $\nu=\mu_f$ by uniqueness.
By \eqref{002} and $\nu=\mu_f$, we have $\delta_{x,f}^n\to\mu_f$, so $x\in B_f(\mu_f)$.
Since $x\in\Lambda'$ was arbitrary, we conclude that $\Lambda'\subset B_f(\mu_f)$.

We now prove the converse.
By Lemma~\ref{lem:SRB-basin}, $f$ admits a unique SRB measure $\mu_f$ whose basin $B_f(\mu_f)$ has full Riemannian volume.
In particular, $B_f(\mu_f)$ is a nonempty measurable set.

For every $z \in M$, we have
\begin{equation*}
  d_W(\delta^n_{f(z),f},\delta^n_{z,f})
  \leq
  \frac{\dist(f^n(z),z)}{n}
  \leq
  \frac{\operatorname{diam}(M)}{n}.
\end{equation*}
It follows that $z \in B_f(\mu_f)$ if and only if $f(z) \in B_f(\mu_f)$.
Thus $B_f(\mu_f)$ is $f$-invariant.

Suppose that $\Lambda'\subset B_f(\mu_f)$.
Fix $x\in\Lambda'$ and a $C^r$-neighborhood $\mathcal U$ of $f$.
Take $g=f$ and $D_g=B_f(\mu_f)$.
By Lemma~\ref{lem:SRB-basin}, $D_g$ is measurable and has full Riemannian volume.
Since $g=f$, we have $x_g=x$.
For $y \in D_g$, both $x$ and $y$ belong to $B_f(\mu_f)$.
By the triangle inequality, we obtain
\begin{equation*}
  \lim_{n \to \infty}d_W(\delta^n_{y,g}, \delta^n_{x_g,g})
  = 0.
\end{equation*}
Thus $f$ is $C^r$-pluripotent for $\Lambda'$.
\end{proof}

\begin{proof}[Proof of Corollary~\ref{cor:absence-pluripotency}]
  Let $\mu_f$ be the unique SRB measure of $f$, and let $p$ be a periodic point of $f$ with period $\pi(p)$.
  Define
  \begin{equation*}
    \nu_p
    := \frac{1}{\pi(p)} \sum_{i=0}^{\pi(p)-1}\delta_{f^i(p)}.
  \end{equation*}
  Then
  \begin{equation*}
    \lim_{n \to \infty}\delta^n_{p,f}
    = \nu_p
  \end{equation*}
  in the weak-* topology.
  Since
  \begin{equation*}
    \operatorname{supp}\nu_p=\operatorname{Orb}_f(p)
  \end{equation*}
  is finite, whereas
  \begin{equation*}
    \operatorname{supp}\mu_f=M
  \end{equation*}
  we have $\nu_p\ne\mu_f$.
  If $p\in B_f(\mu_f)$, then $\delta^n_{p,f}\to\mu_f$ by the definition of the basin.
  The uniqueness of the weak-* limit would imply $\mu_f=\nu_p$, a contradiction.
  Thus $p\notin B_f(\mu_f)$.

  Since $f$ has periodic points, $B_f(\mu_f)$ is a proper subset of $M$.
  Applying Theorem~\ref{thm:characterization-statistical-pluripotency} with $\Lambda'=M$ yields the conclusion.
\end{proof}

\section*{Acknowledgments}
We thank Yushi Nakano for discussions on whether Anosov diffeomorphisms
can exhibit pluripotency. We also thank Masayuki Asaoka for discussions
related to this work.

\end{document}